\documentclass[]{amsart}
\usepackage[utf8]{inputenc}
\usepackage{amsmath,amssymb,amsthm,mathtools}
\usepackage{graphicx}
\usepackage{verbatim}

\newtheorem{defn}{Definition}
\newtheorem{thm}{Theorem}

\newtheorem{cor}{Corollary}

\newtheorem{lem}{Lemma}


\title[Multiquadric Approximation]{Non-local approximation of continuous functions using scattered translates of the general multiquadric $(x^2+c^2)^{k-1/2}$}
\author{Jeff Ledford}
\date{June 2013}
\keywords{multiquadric, approximation, scattered data}

\begin{document}

\maketitle

\section{Introduction}
Approximation and interpolatory properties of the multiquadric have been investigated before, see for instance \cite{Baxter, Baxter Sivakumar, Powell, Riemenscheider}.  These papers deal with  integer or near-integer translates of the multiquadric $(x^2+c^2)^{1/2}$.  In \cite{Powell}, it was shown that continuous functions on a closed interval may be uniformly approximated by scattered translates of the Hardy multiquadric.  We will adapt the method found there to our purposes, showing that the same is true for the $k^{th}$ order multiquadric, $\phi_k(x)=(x^2+c^2)^{k-1/2}$, where $k\in\mathbb{N}$.  The case $k=1$ is the subject of Section 4 in \cite{Powell}.  The family of general multiquadrics has also been studied, \cite{Beatson Dyn, Guo}, although the aims of those papers are a bit different than the present goal, since they consider divided differences of the general multiquadric.

\noindent This note is organized as follows.  In the next section, various definitions and facts are collected.  The third section contains the main theorem to be proved, while the fourth section contains the details of the proof.

\section{Definitions and Basic Facts}
We will need to know what ``scattered" means.  For our purposes, we have the following definition in mind.
\begin{defn}
A sequence of real numbers, denoted $\mathcal{X}$, is said to be $\delta$-separated if
\[
\inf_{\overset{x,y\in\mathcal{X}}{x\neq y}}|x-y|= \delta >0
\]
\end{defn}
\noindent It's not hard to see that a $\delta$-separated sequence must be countable.  Take intervals of length $\delta/3$ centered at each point in $\mathcal{X}$, each of these intervals is disjoint and contains a rational number $r$.  Letting a member of $\mathcal{X}$ corrspond to the number $r$ which is in the same interval shows that the set $\mathcal{X}$ is at most countable.  This allows us to index $\mathcal{X}$ with the integers.
\begin{defn}
A sequence $\{x_j\}\subset \mathbb{R}$ is scattered if it is $\delta$-separated for some positive $\delta$ and satisfies 
\[
\lim_{j\to\pm\infty}x_j=\pm\infty
\]
\end{defn}
\noindent Throughout the remainder of the paper we let $\mathcal{X}=\{x_j\}_{j\in\mathbb{Z}}$ be a fixed but otherwise arbitrary scattered sequence.

\begin{lem}\label{sum}
For $N\in\mathbb{N}$, $0\leq l \leq N$, and $p$ a polynomial of degree $l$.  We have,
\[
\sum_{j=0}^{N}(-1)^j\binom{N}{j}p(j)=  \left\{
     \begin{array}{lr}
       0 & 0\leq l < N\\
       (-1)^Na_N\cdot N! & l=N
     \end{array}
   \right.,
\]
where $a_N$ is the leading coefficient of $p$.
\end{lem}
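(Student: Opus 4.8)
The plan is to exploit the linearity of the map $p \mapsto \sum_{j=0}^N (-1)^j \binom{N}{j} p(j)$, which reduces the entire statement to the case of monomials. First I would write $p(x) = \sum_{m=0}^l a_m x^m$, so that it suffices to understand the quantity
\[
S_m := \sum_{j=0}^N (-1)^j \binom{N}{j} j^m, \qquad 0 \le m \le N,
\]
and to show that $S_m = 0$ for $0 \le m < N$ while $S_N = (-1)^N N!$. Granting this, if $\deg p = l < N$ then every term of $\sum_m a_m S_m$ vanishes, and if $l = N$ only the top term survives, yielding $a_N S_N = (-1)^N a_N N!$, exactly as claimed.

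To evaluate $S_m$ I would use a generating function. By the binomial theorem,
\[
\sum_{j=0}^N (-1)^j \binom{N}{j} e^{jt} = (1 - e^t)^N,
\]
and expanding $e^{jt} = \sum_{m \ge 0} j^m t^m / m!$ shows that $S_m$ is exactly $m!$ times the coefficient of $t^m$ in $(1-e^t)^N$. On the other hand $1 - e^t = -t(1 + O(t))$, so $(1-e^t)^N = (-1)^N t^N (1 + O(t))^N = (-1)^N t^N + O(t^{N+1})$. Reading off coefficients, the coefficient of $t^m$ vanishes for $m < N$ and equals $(-1)^N$ for $m = N$; multiplying by $m!$ gives $S_m = 0$ for $m < N$ and $S_N = (-1)^N N!$, which is precisely what is needed.

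Alternatively, one may recognize the sum as a finite difference. With the forward difference operator $\Delta f(x) = f(x+1) - f(x)$, the identity $\Delta^N f(0) = \sum_{j=0}^N (-1)^{N-j} \binom{N}{j} f(j)$ gives $\sum_{j=0}^N (-1)^j \binom{N}{j} p(j) = (-1)^N \Delta^N p(0)$, since $(-1)^{-j}=(-1)^j$. Because $\Delta$ lowers the degree of a polynomial by exactly one and sends a leading term $a_d x^d$ to $d\,a_d x^{d-1}$ plus lower order terms, applying $\Delta$ a total of $N$ times annihilates any polynomial of degree $< N$ and turns a degree-$N$ polynomial with leading coefficient $a_N$ into the constant $a_N \cdot N!$, handling both cases at once. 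The only genuine content is the behavior of $\Delta$ on the leading term, which is a one-line induction, so I expect no real obstacle: this is a standard finite-difference identity, and the generating-function route in particular makes the monomial computation essentially mechanical.
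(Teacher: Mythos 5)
Your proposal is correct, and both of your routes differ genuinely from the paper's argument. The paper also starts from the binomial expansion $(1-x)^N=\sum_{j=0}^N(-1)^j\binom{N}{j}x^j$, but then differentiates it $l$ times and evaluates at $x=1$: this produces the sums $\sum_j(-1)^j\binom{N}{j}\,j(j-1)\cdots(j-l+1)$, so the paper must express $p(j)$ in the \emph{falling factorial} basis $\{1,\,j,\,j(j-1),\dots,j(j-1)\cdots(j-l+1)\}$ rather than the monomial basis; the vanishing for $l<N$ comes from the surviving factor $(1-x)^{N-l}$ at $x=1$, and the $l=N$ case from $\left(\frac{d}{dx}\right)^N(1-x)^N=(-1)^N N!$. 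Your first route instead substitutes $x=e^t$ and reads off Taylor coefficients of $(1-e^t)^N=(-1)^Nt^N+\mathcal{O}(t^{N+1})$, which diagonalizes the computation directly on the monomials $j^m$ and so avoids any change of basis -- the price is a (harmless, since everything is a finite combination of convergent series) manipulation of power series composition. Your second route, via $\sum_{j=0}^N(-1)^j\binom{N}{j}p(j)=(-1)^N\Delta^N p(0)$ and the fact that $\Delta$ drops degree by one while sending the leading term $a_dx^d$ to $d\,a_dx^{d-1}+\cdots$, is the most structural of the three: it handles both cases simultaneously and isolates the only real content (the action of $\Delta$ on leading terms) into a one-line induction. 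All three proofs are essentially the same identity viewed through different bases -- falling factorials (paper), monomials via exponential generating functions (your first), and the difference operator (your second) -- and each is complete as stated.
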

\begin{proof}
To see this we need only to use the binomial series expansion.  For $N\in\mathbb{N}$, we have,
\[
(1-x)^N=\sum_{j=0}^{N}(-1)^j\binom{N}{j}x^j.
\]
Now we can differentiate $l$ times to yield
\[
\left(\dfrac{d}{dx}  \right)^l(1-x)^N = \sum_{j=0}^{N}(-1)^j\binom{N}{j}j(j-1)\cdots(j-l+1)x^{j-l}.
\]
Since we can write an $l$-th degree polynomial $p(j)$ as an appropriate linear combination of 
\[
\left\{1,j, j(j-1), j(j-1)(j-2),\dots, j(j-1)\cdots(j-l+1) \right\},
\] all we must do to get the result is evaluate at $x=1$.
\end{proof}

\section{The Main Result}
\begin{thm}
Given $k\in\mathbb{N}$, a scattered sequence $\{x_j\}$, $\epsilon > 0 $, and a continuous function $f:[a,b]\to\mathbb{R}$, we may find a sequence of coefficients $\{a_j\}_{j=1}^{N}$, such that
\[
\sup_{x\in [a,b]}\left|f(x) - \sum_{j=1}^{N}a_j\phi_k(x-x_j)     \right| < \epsilon. 
\]
\end{thm}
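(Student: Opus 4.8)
The plan is to show that the linear span of the translates $\{\phi_k(\cdot-x_j)\}_{j\in\mathbb{Z}}$ is uniformly dense in $C[a,b]$; the statement with a finite $N$ then follows at once, since any member of the span is already a finite combination and density produces an approximant within $\epsilon$. The whole difficulty is thus the density, and the non-local character of the problem is precisely the point: since each translate grows like $|x|^{2k-1}$ at infinity, no single translate and no naive local superposition can work, and one must exploit cancellation among many (including far-away) translates.

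The first ingredient I would build is a family of rapidly decaying \emph{basic functions}. Using the expansion
\[
\phi_k(x)=(x^2+c^2)^{k-1/2}=\sum_{m\ge 0}\binom{k-1/2}{m}c^{2m}\,|x|^{2k-1-2m},
\]
valid for large $|x|$, one sees that $\phi_k$ agrees, up to a term decaying like $|x|^{-1}$, with a polynomial of degree $2k-1$ on each half-line. Hence if coefficients $c_0,\dots,c_{2k}$ (depending on $j$) are chosen, via a Vandermonde argument on the $2k+1$ consecutive nodes $x_j,\dots,x_{j+2k}$, so that the discrete moments $\sum_{i=0}^{2k}c_i\,x_{j+i}^{\,q}$ vanish for $q=0,1,\dots,2k-1$, then the polynomial-annihilation principle underlying Lemma \ref{sum} forces the degree-$(2k-1)$ part of $\sum_i c_i\phi_k(x-x_{j+i})$ to cancel on both half-lines, leaving
\[
\psi_j(x):=\sum_{i=0}^{2k}c_i\,\phi_k(x-x_{j+i})=O\!\left(|x|^{-(2k+1)}\right),\qquad |x|\to\infty .
\]
These $\psi_j$ are smooth, summable, and concentrated near $[x_j,x_{j+2k}]$; their summability is what lets me move between infinite and finite sums, so that truncating a tail of far translates costs only a small uniform error on $[a,b]$ and the final approximant is genuinely a finite combination $\sum_{j=1}^{N}a_j\phi_k(\cdot-x_j)$.

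For the density itself I would argue by duality. By Hahn--Banach and the Riesz representation theorem it suffices to show that the only finite signed Borel measure $\mu$ on $[a,b]$ annihilating every translate, i.e.\ with
\[
F(t):=\int_{[a,b]}\phi_k(x-t)\,d\mu(x)=0\qquad\text{for all }t=x_j,
\]
is $\mu=0$. The function $F$ extends analytically to the strip $|\operatorname{Im}t|<c$, since the only singularities of $(z^2+c^2)^{k-1/2}$ occur at $z=\pm ic$, and by the expansion above $F$ grows at most like $|t|^{2k-1}$ on the real axis. Thus $F$ is analytic in a strip, of polynomial growth, and vanishes on the $\delta$-separated, two-sided unbounded set $\{x_j\}$. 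The main obstacle, and the heart of the proof, is to convert this zero set into the conclusion $F\equiv0$: one needs a Phragm\'en--Lindel\"of / uniqueness theorem for the strip asserting that a function of this controlled growth cannot vanish on a uniformly separated real sequence tending to both $\pm\infty$ unless it is identically zero, after which the analytic dependence of $F$ on $\mu$ yields $\mu=0$. This is where the decaying basic functions $\psi_j$ re-enter: applying the order-$2k$ divided difference to $F$ replaces $\phi_k(x-t)$ by $\psi$-type kernels and pins the growth of $F$ down precisely enough to drive the uniqueness step.
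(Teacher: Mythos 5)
Your reduction by Hahn--Banach and the Riesz representation theorem is legitimate: density of the span of the translates in $C[a,b]$ is equivalent to showing that every finite signed Borel measure $\mu$ on $[a,b]$ satisfying $F(x_j)=\int_{[a,b]}\phi_k(x-x_j)\,d\mu(x)=0$ for all $j$ must be the zero measure. The gap is that the theorem you invoke to close this reduction --- a Phragm\'en--Lindel\"of uniqueness principle asserting that a function analytic in the strip $|\operatorname{Im}t|<c$, of polynomial growth on the real axis, vanishing on a $\delta$-separated real sequence tending to $\pm\infty$, must vanish identically --- is not proved in your sketch, and it is in fact false. The function $\sin(\pi z)$ is analytic and bounded in every horizontal strip and vanishes on $\mathbb{Z}$, which is a perfectly admissible scattered sequence in the sense of Definition 2; moreover a scattered sequence may be arbitrarily sparse (e.g.\ $x_j=\operatorname{sign}(j)\,2^{|j|}$), and sparser zero sets impose even weaker constraints, so no hypothesis of the form ``separated and unbounded in both directions'' can ever force $F\equiv 0$. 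Your suggestion that divided differences rescue the argument does not hold up either: writing $\sum_i c_i F(x_{j+i})=\int_{[a,b]}\psi_j(x)\,d\mu(x)$, the relevant decay of $\psi_j$ on the fixed interval $[a,b]$ occurs as the nodes recede from $[a,b]$, so these integrals tend to $0$ for \emph{every} measure $\mu$; the far conditions become asymptotically vacuous rather than more powerful, and carry exactly the same information as the vanishing of $F$ at the nodes.

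What is missing is precisely the structure the paper extracts from $\phi_k$: the exact form of $F$ at infinity, not merely its growth and analyticity. For $t$ large and positive the expansion \eqref{taylor} converges uniformly for $x\in[a,b]$, so
\[
F(t)=\sum_{j\geq 0}\left(\int_{[a,b]}A_{k,j}(x)\,d\mu(x)\right)t^{\,2k-1-j},
\]
and vanishing of $F$ along any sequence tending to $+\infty$ forces every coefficient to vanish (divide by the leading power and pass to the limit, inductively); note this uses nothing about separation. To conclude $\mu=0$ one then needs that the span of $\{A_{k,j}\}$ contains all polynomials, which is exactly the content of the paper's Lemma 2: for $j\geq 2k$ the polynomial $A_{k,j}$ has exact degree $j-2k$ with nonzero leading coefficient $\pm c^{2k}\binom{k-1/2}{k}$, so all moments $\int x^n\,d\mu$ vanish and $\mu=0$ follows from Weierstrass approximation. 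Completed this way, the duality argument is a genuinely different, and arguably slicker, finish than the paper's constructive one (which solves the Vandermonde system at geometrically spreading nodes to reproduce each $A_{k,2k+N}$ up to $\mathcal{O}(1/y_1)$ as in \eqref{k approx}, then applies Stone--Weierstrass); but its heart is the same expansion and degree lemma. As written, your proposal replaces that essential step with a false general principle, so it does not constitute a proof.
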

\begin{proof}[Sketch of Proof]  The idea is to develop a Taylor expansion
\begin{equation}\label{taylor}
\phi_k(x-y) = y^{2k-1}\sum_{j=0}^{\infty}\dfrac{A_{k,j}(x)}{y^j}.
\end{equation}
Here, we will take $y>>0$, so that the series converges.  Then we show that the linear span of $\{A_{k,j}(x)\}$ contains $x^j$ for $j=0,1,2,\dots$.  We then find coefficients to approximate an $n^{th}$ degree polynomial by using an appropriate Vandermonde matrix.  Finally, since we may approximate polynomials, we appeal to the Stone-Weierstrass Theorem to finish the proof.
\end{proof}
\noindent This theorem, when combined with H\"{o}lder's Inequality allows us to replace the $L^{\infty}([a,b])$ norm above with the $L^p([a,b])$ norm.  We state this in the following corollary.
\begin{cor}
Given $k\in\mathbb{N}$, a scattered sequence $\{x_j\}$, $\epsilon > 0 $, and a continuous function $f:[a,b]\to\mathbb{R}$, we may find a sequence of coefficients $\{a_j\}_{j=1}^{N}$, such that
\[
\left\|f(x) - \sum_{j=1}^{N}a_j\phi_k(x-x_j)     \right\|_{L^p([a,b])} < \epsilon.
\]
\end{cor}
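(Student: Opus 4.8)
The plan is to obtain the $L^p$ estimate as an immediate corollary of the Theorem, using that on a finite interval the $L^p$ norm is controlled by the $L^\infty$ norm. First I would dispose of the degenerate case $a=b$, where every term vanishes, and assume $b>a$ so that $(b-a)>0$; I also take $1\le p<\infty$, since $p=\infty$ is precisely the statement of the Theorem.

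The key inequality to record is that for any $g\in C([a,b])$,
\[
\|g\|_{L^p([a,b])}\le (b-a)^{1/p}\,\|g\|_{L^\infty([a,b])}.
\]
This is the standard embedding $L^\infty([a,b])\hookrightarrow L^p([a,b])$ on a set of finite measure, which follows from H\"{o}lder's Inequality applied to the product $|g|^p\cdot 1$; concretely, from the pointwise bound $|g(x)|\le\|g\|_{L^\infty([a,b])}$ one gets
\[
\int_a^b |g(x)|^p\,dx \le \|g\|_{L^\infty([a,b])}^p\int_a^b 1\,dx = (b-a)\,\|g\|_{L^\infty([a,b])}^p,
\]
and taking $p$-th roots gives the claim.

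With this inequality in hand the corollary is immediate. Given $\epsilon>0$, I would invoke the Theorem with the smaller tolerance $\epsilon'=\epsilon\,(b-a)^{-1/p}$ to produce coefficients $\{a_j\}_{j=1}^N$ for which $\|f-\sum_{j=1}^N a_j\phi_k(\cdot-x_j)\|_{L^\infty([a,b])}<\epsilon'$. Setting $g=f-\sum_{j=1}^N a_j\phi_k(\cdot-x_j)$, which is continuous on $[a,b]$ as a finite linear combination of continuous functions and hence lies in every $L^p([a,b])$, the displayed inequality yields $\|g\|_{L^p([a,b])}\le (b-a)^{1/p}\epsilon'=\epsilon$. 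I do not expect a genuine obstacle: all of the analytic content is already contained in the Theorem, and the passage from the uniform norm to the $L^p$ norm over a compact interval is elementary. The only points requiring care are the handling of the trivial cases and the observation that the approximant is continuous, hence $p$-integrable on $[a,b]$.
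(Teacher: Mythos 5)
Your proof is correct and follows exactly the route the paper intends: the paper derives the corollary by combining the Theorem with H\"{o}lder's Inequality, which is precisely your embedding $\|g\|_{L^p([a,b])}\le (b-a)^{1/p}\|g\|_{L^\infty([a,b])}$ applied with the tolerance rescaled to $\epsilon(b-a)^{-1/p}$. Your additional care with the degenerate case $a=b$ and the continuity of the approximant is fine but not needed beyond what the paper leaves implicit.
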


\section{Details}
This section provides a rigorous justification for the outline of the proof. We begin with the Taylor expansion, which we recognize as the familiar binomial series.  For $y>>0$ we have,

\begin{align*}
&y^{-2k+1}\phi_k(x-y) = \\
&\sum_{n=0}^{\infty}\binom{k-\frac{1}{2}}{n}\sum_{j=0}^{n}\binom{n}{j}(-2x)^{n-j}\sum_{l=0}^{j}\binom{j}{l}x^{2(j-l)}c^{2l}y^{-(n+j)}\\
=&\sum_{n=0}^{\infty}\sum_{j=0}^{n}\sum_{l=0}^{j}\binom{k-\frac{1}{2}}{n}\binom{n}{j}\binom{j}{l}c^{2l}(-2)^{n-j}\dfrac{x^{j+n-2l}}{y^{n+j}}\\
=&\sum_{n=0}^{\infty}\sum_{j=n}^{2n}\sum_{l=0}^{j-n}\binom{k-\frac{1}{2}}{n}\binom{n}{j-n}\binom{j-n}{l}c^{2l}(-2)^{2n-j}\dfrac{x^{j-2l}}{y^{j}}\\
=&\sum_{j=0}^{\infty}\dfrac{(-1)^j}{y^{j}}\left\{\sum_{n=\lceil j/2 \rceil}^{j}\sum_{l=0}^{j-n}\binom{k-\frac{1}{2}}{n}\binom{n}{j-n}\binom{j-n}{l}c^{2l}2^{2n-j} x^{j-2l}   \right\} \\
=&\sum_{j=0}^{\infty}\dfrac{A_{k,j}(x)}{y^j}.
\end{align*}
In the fourth line we have re-indexed the sum, and in the fifth changed the order of summation.  All of this hinges on the binomial series being absolutely convergent, but since we've assumed $y>>0$, the argument of the binomial series will be close to $0$.  This gives us a formula for the polynomials $A_{k,j}(x)$:
\begin{equation}\label{poly}
A_{k,j}(x)=(-1)^{j}\sum_{n=\lceil j/2 \rceil}^{j}\sum_{l=0}^{j-n}\binom{k-\frac{1}{2}}{n}\binom{n}{j-n}\binom{j-n}{l}2^{2n-j}c^{2l}x^{j-2l}.
\end{equation}
We can glean lots of information from \eqref{poly}, for instance, $\deg(A_{k,j})$ has the same parity as $j$ and $\deg(A_{k,j})\leq j$.  We are interested in the leading term for $A_{k,j}$, for this will tell us the exact degree.  We need only re-index and change the order of summation, since both sums are finite, there is no problem with convergence.
\begin{align*}
&\sum_{n=\lceil j/2 \rceil}^{j}\sum_{l=0}^{j-n}\binom{k-\frac{1}{2}}{n}\binom{n}{j-n}\binom{j-n}{l}2^{2n-j}c^{2l}x^{j-2l}\\
=&\sum_{l=0}^{\lceil j/2 \rceil}\sum_{n=\lceil j/2 \rceil}^{j-l}\binom{k-\frac{1}{2}}{n}\binom{n}{j-n}\binom{j-n}{l}2^{2n-j}c^{2l}x^{j-2l}\\
=&\sum_{l=0}^{\lceil j/2 \rceil} x^{j-2l} \left\{ \sum_{n=\lceil j/2 \rceil}^{j-l}\binom{k-\frac{1}{2}}{n}\binom{n}{j-n}\binom{j-n}{l}2^{2n-j}c^{2l}  \right\}
\end{align*}
To simplify notation, we write
\begin{align}\label{coeffs}
\nonumber A_{k,j}(x)&=(-1)^j\sum_{l=0}^{\lceil j/2 \rceil}c^{2l} x^{j-2l} \left\{ \sum_{n=\lceil j/2 \rceil}^{j-l}\binom{k-\frac{1}{2}}{n}\binom{n}{j-n}\binom{j-n}{l}2^{2n-j} \right\} \\
&=(-1)^j\sum_{l=0}^{\lceil j/2 \rceil}a_{j-2l}x^{j-2l}.
\end{align}
We are in position to state the following lemma.
\begin{lem}
Given $k\in\mathbb{N}$, and $j\geq 2k$, we have
\[
a_{j-2l}= \left\{
     \begin{array}{lr}
       0 & 0\leq l < k, \\
       c^{2k}\binom{k-1/2}{k} & l=k.
     \end{array}
   \right.
\]
Hence, the polynomial $A_{k,j}(x)$ is a polynomial of degree $j-2k$.
\end{lem}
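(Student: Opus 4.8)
The plan is to sidestep the triple sum in \eqref{coeffs} and instead re-expand the generating function so that the collapse in degree becomes visible term by term. Recall that, writing $t=1/y$, the computation at the start of this section is the identity $\sum_{j\ge 0}A_{k,j}(x)t^j=(1-2xt+(x^2+c^2)t^2)^{k-1/2}$. The one algebraic observation I would exploit is that the quadratic in $t$ is a perturbation of a perfect square, $1-2xt+(x^2+c^2)t^2=(1-xt)^2+c^2t^2$. Treating $(1-xt)^2$ as the dominant term and $c^2t^2$ as small (which is exactly the regime $y\gg 0$ already assumed), the binomial series gives
\[
\bigl((1-xt)^2+c^2t^2\bigr)^{k-1/2}=\sum_{l=0}^{\infty}\binom{k-1/2}{l}c^{2l}t^{2l}(1-xt)^{2k-1-2l}.
\]

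First I would justify this rearrangement, then expand each factor $(1-xt)^{2k-1-2l}$ by the binomial theorem and collect the coefficient of $t^j$. This produces the compact formula
\[
a_{j-2l}=c^{2l}\binom{k-1/2}{l}\binom{2k-1-2l}{j-2l},
\]
which I would check agrees with the coefficients defined in \eqref{coeffs}. I expect this identification --- justifying the interchange of the two sums and confirming that the product of two binomial coefficients really does reproduce the inner sum of \eqref{coeffs} --- to be the only genuine obstacle; once it is in place the lemma is immediate.

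Indeed, with this formula the two cases fall out directly. For $0\le l<k$ the exponent $2k-1-2l$ is a nonnegative integer, and the hypothesis $j\ge 2k$ forces $j-2l\ge 2(k-l)>2k-1-2l$, so the lower index of $\binom{2k-1-2l}{j-2l}$ exceeds the (nonnegative integer) upper index and the binomial coefficient vanishes; hence $a_{j-2l}=0$. For $l=k$ the exponent is $-1$, and from $\binom{-1}{m}=(-1)^m$ one obtains $a_{j-2k}=(-1)^{j}c^{2k}\binom{k-1/2}{k}$, so that the leading coefficient of the polynomial $A_{k,j}(x)=(-1)^j\sum_l a_{j-2l}x^{j-2l}$, namely the coefficient of $x^{j-2k}$, equals $c^{2k}\binom{k-1/2}{k}$. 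Since $\binom{k-1/2}{k}$ is a product of positive factors it is nonzero, so all terms of degree greater than $j-2k$ cancel while the term of degree $j-2k$ survives. This is precisely the assertion that $A_{k,j}$ has degree $j-2k$.
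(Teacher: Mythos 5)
Your proposal is correct, and it takes a genuinely different route from the paper's proof. The paper attacks the triple sum in \eqref{coeffs} head on: it re-indexes, splits into cases according to the parity of $N=j-2k$, rewrites the inner sum as an alternating binomial sum against the monic polynomial $(2(n+m)+1)!!/\bigl(2^m(2n+1)!!\bigr)$, and finishes with Lemma \ref{sum}. You instead return to the generating function $\sum_{j\geq 0}A_{k,j}(x)t^j=\bigl(1-2xt+(x^2+c^2)t^2\bigr)^{k-1/2}$, $t=1/y$, and exploit the decomposition $1-2xt+(x^2+c^2)t^2=(1-xt)^2+c^2t^2$, expanding in powers of $c^2t^2$. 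This buys you a closed form for \emph{every} coefficient, $a_{j-2l}=c^{2l}\binom{k-1/2}{l}\binom{2k-1-2l}{j-2l}$, from which the lemma is immediate and conceptually transparent: for $l<k$ the term $\binom{k-1/2}{l}c^{2l}t^{2l}(1-xt)^{2k-1-2l}$ is a polynomial in $t$ of degree $2k-1<j$, so it cannot contribute to $t^j$ --- no parity split, no appeal to Lemma \ref{sum}. The steps you defer are indeed routine: for $x\in[a,b]$ and $t$ small enough that $\bigl(c+\max(|a|,|b|)\bigr)t<1$, the double series converges absolutely, which justifies both the binomial expansion and collecting the coefficient of $t^j$; the identification with \eqref{coeffs} then follows from uniqueness of power-series coefficients in $t$ together with uniqueness of the coefficients of a polynomial in $x$.

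One caution, which is a defect of the lemma's statement rather than of your argument: your formula gives $a_{j-2k}=(-1)^jc^{2k}\binom{k-1/2}{k}$, differing from the stated value by the factor $(-1)^j$. Your sign is the correct one. A direct check at $k=1$, $j=3$ gives $A_{1,3}(x)=c^2x/2$, so by \eqref{coeffs} $a_1=-c^2/2$, matching your formula; moreover the paper's own odd-parity computation ends with the overall factor $(-1)^{m+1}\cdot(-1)^m=-1$, which the lemma's statement silently drops. The discrepancy is harmless downstream, because what the rest of the paper actually uses is the leading coefficient of $A_{k,j}$ itself, namely $(-1)^ja_{j-2k}=c^{2k}\binom{k-1/2}{k}>0$ --- exactly the quantity you compute --- so the conclusion $\deg A_{k,j}=j-2k$ stands in both treatments.
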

\begin{proof}
We need only find the sum in \eqref{coeffs}.  To do this, we re-index.
\begin{align*}
 &\sum_{n=\lceil j/2 \rceil}^{j-l}\binom{k-1/2}{n}\binom{n}{j-n}\binom{j-n}{l}2^{2n-j} \\
=& \sum_{n=0}^{j-\lceil j/2 \rceil - l}\binom{k-1/2}{n+\lceil j/2 \rceil} \binom{n+\lceil j/2 \rceil}{j-\lceil j/2 \rceil-n} \binom{j-\lceil j/2 \rceil-n}{l} 2^{2n+ 2\lceil j/2\rceil -j }
\end{align*}
Now we let $j=2k+N$, for $N=0,1,2,\dots$, and we have two cases, the case that $N$ is even, and the case that $N$ is odd.  Both cases being similar calculations, we will work the odd case here.  By letting $N=2m+1$, we have
\begin{align*}
& \sum_{n=0}^{j-\lceil j/2 \rceil - l}\binom{k-1/2}{n+\lceil j/2 \rceil} \binom{n+\lceil j/2 \rceil}{j-\lceil j/2 \rceil-n} \binom{j-\lceil j/2 \rceil-n}{l} 2^{2n+ 2\lceil j/2\rceil -j }\\
=&\sum_{n=0}^{k+m-l}\binom{k-1/2}{n+k+m+1}\binom{n+k+m+1}{k+m-n}\binom{k+m-n}{l}2^{2n+1}\\
=&k!\binom{k-1/2}{k}\sum_{n=0}^{k+m-l}2^{n-m}(-1)^{n+m+1} \dfrac{(2(n+m)+1)!!}{(2n+1)!l!(k+m-n-l)!}\\
=&\dfrac{(-1)^{m+1}k!}{l!(k+m-l)!}\binom{k-1/2}{k}\sum_{n=0}^{k+m-l}(-1)^{n}\binom{k+m-l}{n}\dfrac{(2(n+m)+1)!!}{2^m(2n+1)!!}.
\end{align*}
The last summand may be reduced by noting that 
\[
\dfrac{(2(n+m)+1)!!}{2^m(2n+1)!!}=\dfrac{(2n+2m+1)(2n+2m-1)\cdots(2n+3)}{2^m}
\]
is a monic, $m^{th}$ degree polynomial in the variable $n$.  Thus Lemma \ref{sum}, gives us the result provided $k-l \geq 0$.  This proves the case when N is odd, the even case is virtually the same computation.
\end{proof}
Now we choose a subset of $\mathcal{X}$ which allows us to recover $A_{k,2k+N}(x)$.  Pick a set $\{y_j:1\leq j \leq 2k+N+1\}\subset\mathcal{X}$ using the following conditions
\begin{itemize}
\item $y_1>>0$,
\item $y_j\geq 2 y_{j-1}$;\quad $j=2,3,\dots,2k+N+1$.
\end{itemize}
The modified $(2k+N+1)\times (2k+N+1)$ Vandermonde system
\[
\sum_{j=1}^{2k+N+1}b_j y_j^{l}=\delta_{l,-N-1} \qquad l=2k-1,2k-2,\dots,-N-1
\]
is invertible.  The solution may be found by repeated use of Cramer's rule  We get
\[
b_j=(-1)^{j+1}y_j^{N+1}\prod_{l=1,l\neq j}^{2k+N+1}\left[1-\dfrac{y_j}{y_l}  \right]^{-1}.
\]
As a result of this, we have that the set of products
\[
\left \{b_j y_j^{-(N+1)}:j=1,2,\dots,2K+N+1  \right\}
\]
is uniformly bounded.  Using these coefficients, we have
\begin{equation}\label{k approx}
\sum_{j=1}^{2k+N+1}b_j\phi_k(x-y_j)=A_{k,N+2k}(x)+ \mathcal{O}\left(\dfrac{1}{y_1}\right), \qquad a\leq x \leq b.
\end{equation}
Since $y_1$ may be as large as we like and $A_{k,2k+N}(x)$ is an $N^{th}$ degree polynomial, to approximate continuous functions, we need only find a polynomial to approximate on $[a,b]$, then approximate the polynomial with the sum above.

\end{document}